\newlength{\widb}
\newlength{\wida}
\newtheorem{thm}{Theorem}[section]
\newtheorem{lemma}[thm]{Lemma}
\numberwithin{equation}{section}
\author{\Large{Phoebe Hollowbread-Smith and Riccardo W. Maffucci}}
\newcommand{\Addresses}{{
		\footnotesize
		{
		P.~Hollowbread-Smith, \textsc{University of Coventry, United Kingdom CV1}\par\nopagebreak\vspace{-0.35cm}
		\textit{E-mail address}, P.~Hollowbread-Smith: \texttt{smithp48@uni.coventry.ac.uk}}
		\\
		\vspace{0.25cm}
		\\
		R.W.~Maffucci (corresponding author), \textsc{University of Coventry, United Kingdom CV1}\par\nopagebreak\vspace{-0.35cm}
		\textit{E-mail address}, R.W.~Maffucci: \texttt{riccardowm@hotmail.com}}
}
\title{\Large{\uppercase{\bf Generation of $3$-connected, planar \\line graphs}}}
\date{}
\def\calL{\mathcal{L}}
\def\calM{\mathcal{M}}
\def\calP{\mathcal{P}}
\def\calR{\mathcal{R}}
\newcommand{\T}{\mathcal{T}}
\begin{document}
\titleformat{\section}
  {\Large\scshape}{\thesection}{1em}{}
\titleformat{\subsection}
  {\large\scshape}{\thesubsection}{1em}{}
\maketitle

%\pagenumbering{roman}
%\addcontentsline{toc}{section}{Table of contents}
%\tableofcontents

\begin{abstract}
We classify and construct all line graphs that are $3$-polytopes (planar and $3$-connected). Apart from a few special cases, they are all obtained starting from the medial graphs of cubic (i.e., $3$-regular) $3$-polytopes, by applying two types of graph transformations. This is similar to the generation of other subclasses of $3$-polytopes \cite{brinkmann_generation_2005,hasheminezhad2011recursive}.
\end{abstract}
{\bf Keywords:} Line graph, Planar graph, $3$-polytope, Graph transformation, Medial graph, Connected.%Unigraphic, Unique realisation, Degree sequence, Valency, Planar graph, Graph radius, $3$-polytope, Enumeration, Graph algorithm.
\\
{\bf MSC(2010):} 05C75, 05C76, 05C10, 05C40, 52B05, 52B10.
% 05C10, 05C40, 05C75, 05C76, 05C07, 05C62, 05C85, 52B05, 52B10.%05C07, 05C75, 05C62, 05C10, 52B05, 52B10, 05C30, 05C85.

\section{Introduction}
In this paper, we will work with simple graphs $\Gamma$, having no repeated edges or loops. The vertex and edge sets will be denoted by $V(\Gamma)$ and $E(\Gamma)$. We say that a graph $\Gamma$ is $k$-connected for some $k\geq 1$ if $V(\Gamma)\geq k+1$ and, however we remove $k-1$ or fewer vertices, the resulting graph is connected. The vertex connectivity of $\Gamma$ is the largest $k$ such that $\Gamma$ is $k$-connected.% We say that $\Gamma$ is $k$-edge-connected for some $k\geq 1$ if $V(\Gamma)\geq 2$ and, however we remove $k-1$ or fewer edges, the resulting graph is connected.

A graph is the $1$-skeleton (wireframe) of a $3$-polytope (sometimes a.k.a. polyhedron in the literature) if and only if it is planar and $3$-connected. This beautiful theorem of Rademacher-Steinitz relates graph theory, discrete geometry, and topology. Hence two polyhedra are homeomorphic if and only if their graphs are isomorphic. Accordingly, we will say that a graph is a $3$-polytope if it is planar and $3$-connected. Its regions are also called faces. The dual graph of a $3$-polytope is well-defined, and is again a $3$-polytope (in general, the dual graph of a planar graph depends on the planar immersion, and may have repeated edges and loops). For examples of small $3$-polytopes with up to $14$ edges, see e.g. \cite[Appendix A]{maffucci2022polyhedral}.

Tutte's Algorithm \cite[Theorem 6.1]{tutt61} constructs all $3$-polytopes iteratively by their size (number of edges). Given a subclass of $3$-polytopes, a natural question is to classify and/or construct them all. For instance, a special class of $3$-polytopes is the maximal planar graphs (planar and such that, however we add any more edges, the resulting graph is non-planar). These are simply the triangulations of the sphere (or of the plane, allowing for an `external' unbounded region). Their duals are the cubic (i.e. $3$-regular) $3$-polytopes. All triangulations of the sphere were constructed in \cite{bowen1967generations}, by (iteratively) applying three transformations to a finite number of starting graphs.

%It is possible to slightly modify Tutte's Algorithm to construct only the $3$-polytopes of radius $1$ \

Recently, all quadrangulations \cite{brinkmann_generation_2005} and pentangulations \cite{hasheminezhad2011recursive} of the sphere have been constructed. These are the planar graphs of connectivity at least $2$, duals of the quartic ($4$-regular) and quintic ($5$-regular) simple planar graphs respectively. The subclass of $3$-polytopes where each face is a quadrangle, and the subclass of $3$-polytopes where each face is a pentagon have also been constructed \cite{brinkmann_generation_2005,hasheminezhad2011recursive}. In both cases, the authors applied a finite number of transformations to an infinite family of starting graphs (the duals of the antiprisms in the case of \cite{brinkmann_generation_2005}).

In \cite{mafkpr}, the second author completely classified the subclass of $3$-polytopes that are the Kronecker (a.k.a. tensor, direct) product of graphs. Perhaps surprisingly, one of the factors may be non-planar. Which $3$-polytopes are Cartesian and strong graph products is also known \cite{behmah,jhaslu}. A modification of Tutte's Algorithm allows to generate the class of $3$-polytopes of graph radius $1$ \cite{mafp05}. Several other interesting classes of $3$-polytopes were constructed in \cite{dillencourt1996polyhedra}.

In this paper, we ask the natural question, which $3$-polytopes are line graphs? If $\Gamma$ is a graph, its line graph $\calL(\Gamma)$ is defined by
\[V(\calL(\Gamma))=E(\Gamma)\]
and
\[E(\calL(\Gamma))=\{(e,e') : e,e' \text{ are incident edges in $\Gamma$}\}.\]
If $ab\in E(\Gamma)$, we may denote the corresponding vertex of $\calL(\Gamma)$ simply by $ab$ when there is no risk of confusion.

Line graphs are a very well-studied subclass of graphs. For instance, we know that $\Gamma$ is the line graph of some graph if and only if $\Gamma$ does not contain any of nine forbidden induced subgraphs \cite[Theorem 8.4 and Figure 8.3]{harary}. Problems about the connectivity of line graphs have attracted recent attention \cite{knor2003connectivity,xu2005super,shao2010connectivity,huang2011spanning,toth2012connectivity,shao2018essential}.

The result of our investigation completely characterises and constructs the graphs that are line graphs and also $3$-polytopes. It will be proven in section \ref{sec:2}.
\begin{thm}
	\label{thm:1}
Let $\calP=\calL(G)$ be the line graph of a graph $G$, with $G$ different from the seven in the top row of Figure \ref{fig:sp}. Then $\calP$ is a $3$-polytope  if and only if $G$ is obtained from a cubic $3$-polytope by subdividing each edge at most once, and then adding at most one degree $1$ neighbour to each vertex of degree $3$.
\end{thm}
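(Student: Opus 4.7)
My plan is to prove both implications of the biconditional, starting with the easier direction and then doing the structural reverse analysis.

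For the sufficient direction, I would take $G$ built from a cubic $3$-polytope $H$ by the prescribed operations and verify that $\calL(G)$ is a $3$-polytope. Planarity of $\calL(G)$ I would deduce from Sedl\'a\v cek's classical criterion: $G$ is planar (the two operations preserve planarity), $\Delta(G)\leq 4$ since the only degree-raising operation is attaching a pendant to a degree-$3$ vertex, and every resulting degree-$4$ vertex is a cut vertex of $G$ because the pendant leaf is itself a cut vertex. For $3$-connectivity of $\calL(G)$, I would argue by contradiction: assume $\{x,y\}\subseteq V(\calL(G))$ is a $2$-cut, interpret $x,y$ as edges of $G$, and consider the subgraph of $G$ obtained by deleting those two edges. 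Using the $3$-connectivity of $H$, together with the fact that the subdivisions and pendants only affect local cliques in $\calL(G)$, I would build a path in $\calL(G)$ between any two purported separated vertices, yielding a contradiction.

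For the necessary direction, suppose $\calP=\calL(G)$ is a $3$-polytope. From Sedl\'a\v cek I immediately get that $G$ is planar, $\Delta(G)\leq 4$, and every degree-$4$ vertex is a cut vertex of $G$. From the $3$-connectivity of $\calL(G)$ I get $\delta(\calL(G))\geq 3$, which translates into the edge-degree condition $\deg_G(u)+\deg_G(v)\geq 5$ for every $uv\in E(G)$. This already forces: degree-$1$ vertices of $G$ are attached only to degree-$4$ vertices; degree-$2$ vertices of $G$ have both neighbours of degree $\geq 3$; and no two degree-$4$ vertices share more than one common pendant structure. I would then show that each degree-$4$ vertex carries exactly one pendant (more than one would create a $2$-cut in $\calL(G)$ around the corresponding $K_4$), and that no two subdivision vertices are adjacent (otherwise the corresponding edge in $\calL(G)$ would have endpoints of degree $2$, violating $\delta(\calL(G))\geq 3$).

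Next I would construct $H$ from $G$ by deleting every pendant and then suppressing every degree-$2$ vertex (replacing each such vertex and its two incident edges by a single edge between its two neighbours). By the previous step this $H$ is simple and $3$-regular. Planarity of $H$ is inherited from $G$. The remaining task, and the main obstacle of the proof, is to show that $H$ is $3$-connected: I would assume a $2$-cut $\{a,b\}$ in $H$ and lift it to a small vertex cut of $\calL(G)$ by choosing, for each of $a$ and $b$, one edge of $G$ incident with it (adjusted if necessary across subdivisions or pendants) so that removing their line-graph counterparts disconnects $\calL(G)$. The cases in which this lift fails are precisely those in which $G$ is too small for the correspondence to carry information, and these should collapse to a short finite list; this is where the seven excluded graphs of Figure~\ref{fig:sp} arise. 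Once $3$-connectivity of $H$ is established, tracing the construction backwards recovers $G$ from $H$ as stated.

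Finally I would close the loop by verifying that the construction of the forward direction, applied to the $H$ just built, returns exactly the given $G$, so that the characterisation is sharp.
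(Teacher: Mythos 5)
The main gap is in your reverse direction, at the sentence ``By the previous step this $H$ is simple and $3$-regular.'' Simplicity does not follow from anything you have established: if a degree-$2$ vertex of $G$ lies on a triangle, i.e.\ its two neighbours are adjacent, then suppressing it creates a parallel edge. Ruling this out is a genuine step, and it is precisely where six of the seven excluded graphs come from: one shows that such a configuration, combined with the $3$-connectivity of $\calP=\calL(G)$, forces the pendant-stripped graph to be the diamond or $K_{2,3}$, so that $G$ is one of $J_2,\dots,J_7$ of Figure \ref{fig:sp}. Your proposal instead locates the exceptions in the $3$-connectivity argument for $H$, where in fact none arise, so as written the exceptional graphs are never actually identified and the suppression step can silently fail. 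Relatedly, your ``exactly one pendant per degree-$4$ vertex'' claim hides the seventh exception: two pendants at a degree-$4$ vertex do not always create a $2$-cut --- they fail to do so exactly when $\calP$ is the tetrahedron, i.e.\ $G=K_{1,4}$, which must be carved out explicitly. And your parenthetical only addresses ``at most one'' pendant; to get $H$ cubic you also need ``at least one'', which requires combining Sedl\'a\v{c}ek's condition that every degree-$4$ vertex is a cut vertex with the $3$-connectivity of $\calP$ (if all four neighbours had degree $\geq 2$, splitting the components of $G$ minus the four incident edges yields a $1$- or $2$-cut of $\calP$); that argument is missing.

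The rest of your architecture is viable but only sketched, and it does diverge from the paper in ways worth noting. For sufficiency, the paper avoids your direct ``delete two edges of $G$ and rebuild paths'' argument: it starts from the medial graph of the cubic $3$-polytope, uses the commutations $\calL(\T_1(\Gamma))=\T_1'(\calL(\Gamma))$ and $\calL(\T_2(\Gamma))=\T_2'(\calL(\Gamma))$, and checks that $\T_1'$ and $\T_2'$ preserve $3$-polytopality via the criterion that a planar $2$-connected graph is $3$-connected iff any two faces meet in at most an edge; your route can be made to work (a $2$-cut of $\calL(G)$ is a pair of edges of $G$ whose deletion leaves at least two components each containing an edge, which one then excludes using $3$-edge-connectivity of the cubic skeleton), but none of this is carried out. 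Likewise your lift of a $2$-cut of $H$ to a cut of $\calL(G)$ is plausible (the paper instead argues $2$-connectivity via bridges and then applies the face-intersection criterion, exploiting that $G_2$ is cubic), but it needs the details filled in. Finally, a small slip: a pendant leaf is never a cut vertex; the cut vertex certifying Sedl\'a\v{c}ek's condition is its degree-$4$ neighbour.
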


An example where $G$ is itself a cubic $3$-polytope is depicted in Figure \ref{fig:ex1}. Another example is given in Figure \ref{fig:ex2}. The operation of subdividing an edge with both endpoints of degree $3$ is $\T_1$ in Figure \ref{fig:1}. The operation of adding a degree $1$ neighbour to a vertex of degree $3$ is $\T_2$ in Figure \ref{fig:2}.

%By the way, the tetrahedron is the line graph of $K_{1,4}$, and the square pyramid is the line graph of the diamond graph.

\begin{figure}[h!]
	\centering
	\includegraphics[width=3.25cm,clip=false]{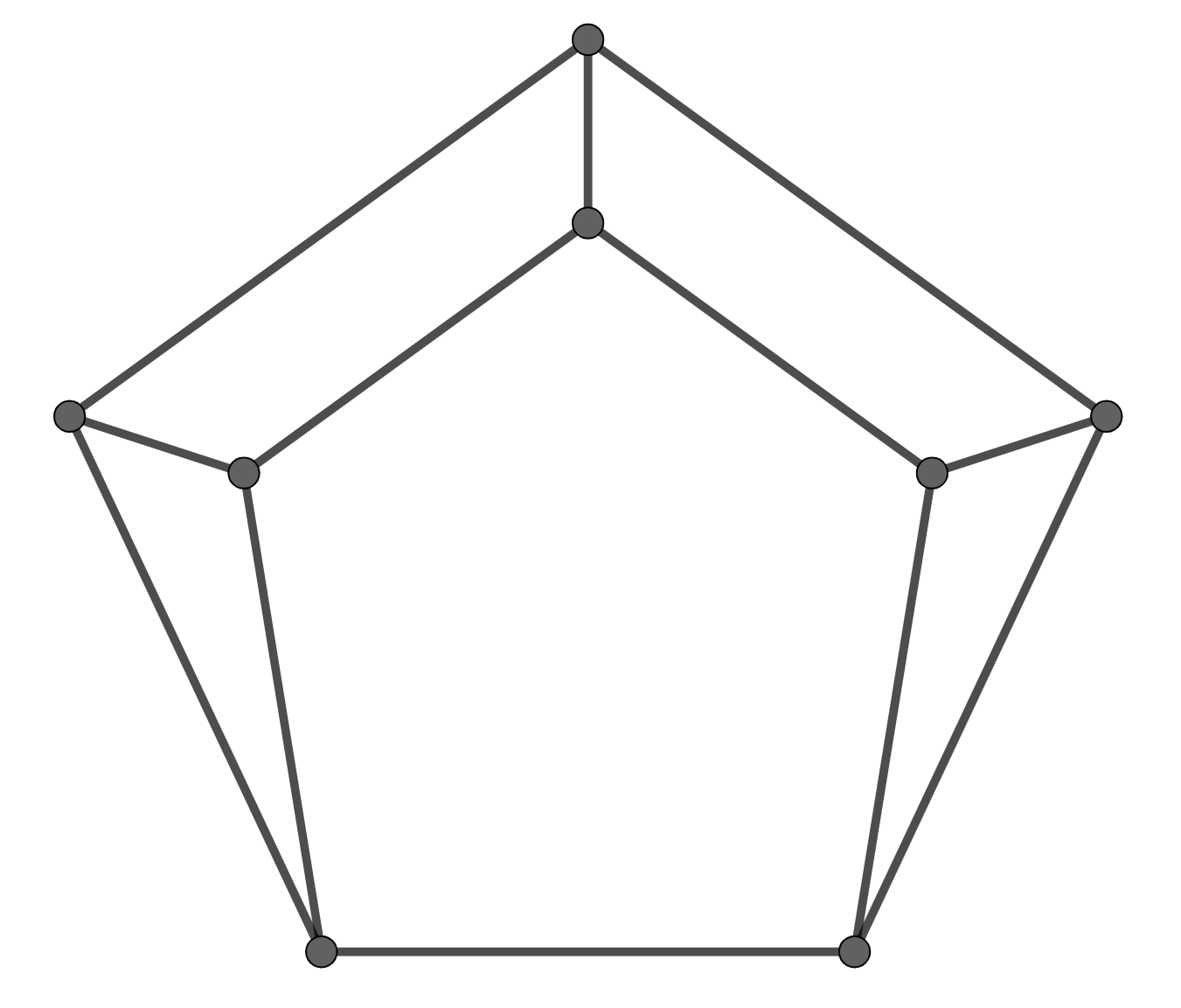}
	\hspace{2cm}
	\includegraphics[width=3.25cm,clip=false]{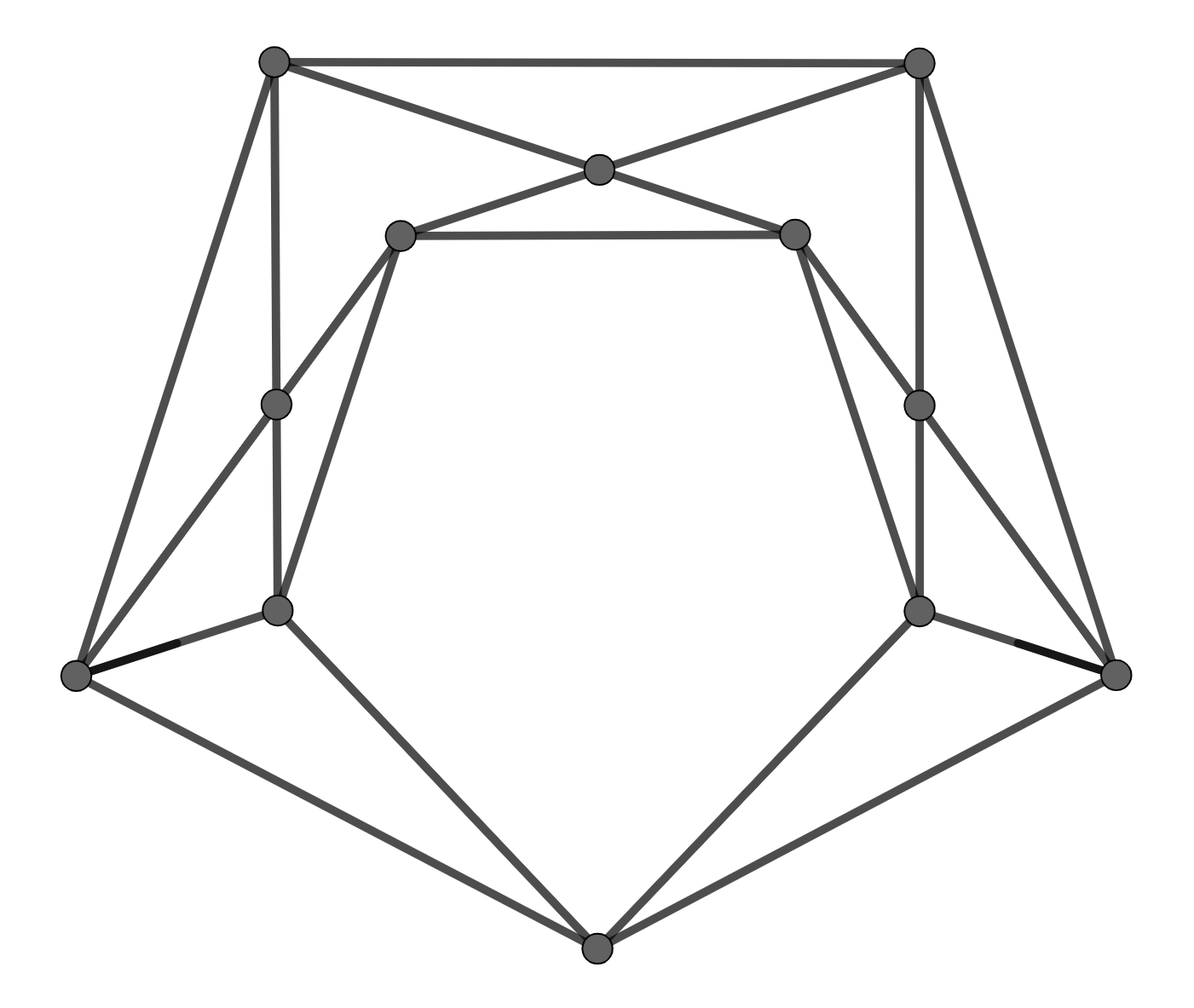}
	\caption{A cubic $3$-polytope $G$ (left), and its $3$-polytopal line graph (right).}
	\label{fig:ex1}
\end{figure}

\begin{figure}[h!]
	\centering
	\includegraphics[width=3.25cm,clip=false]{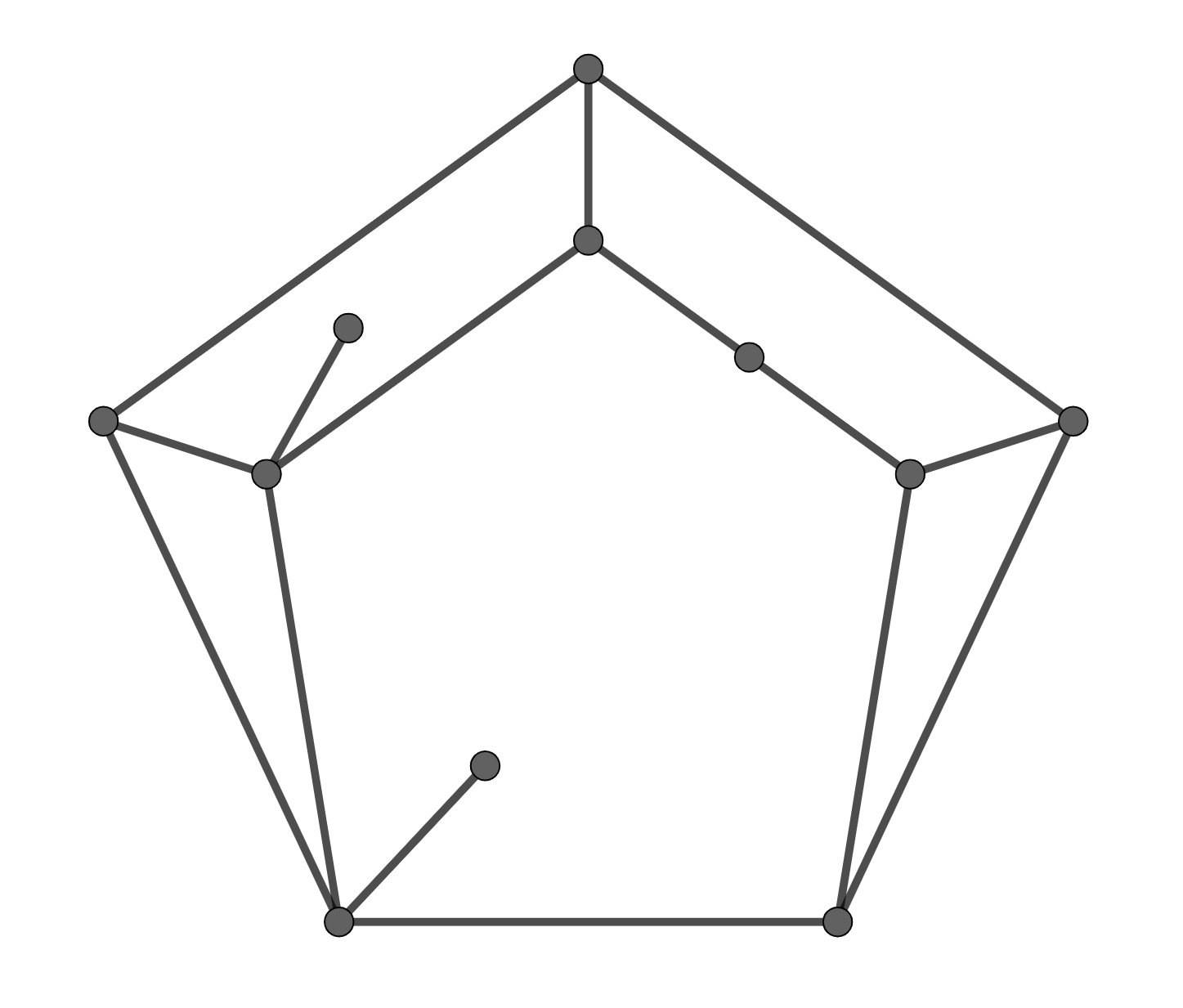}
	\hspace{2cm}
	\includegraphics[width=3.25cm,clip=false]{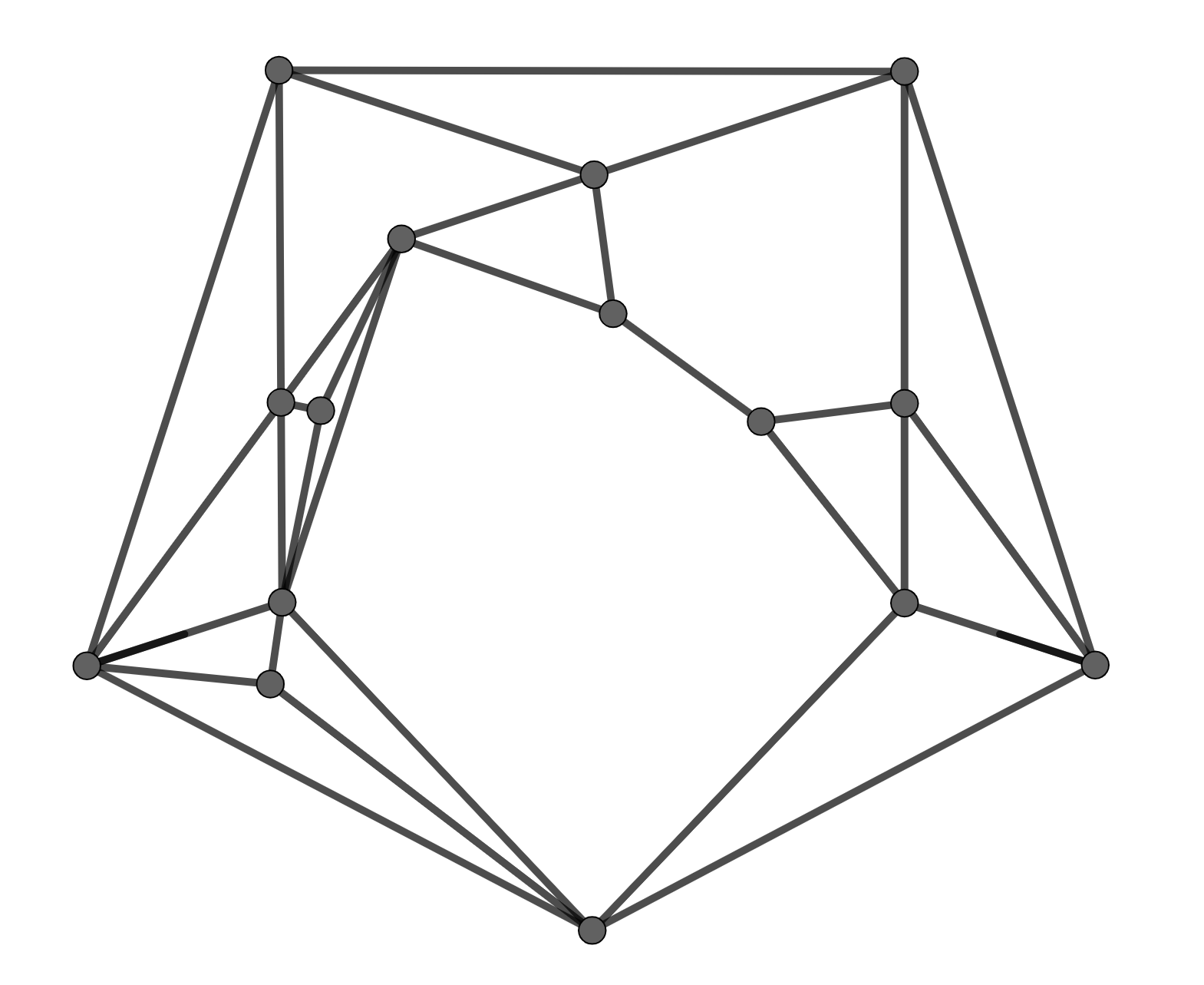}
	\caption{A graph obtained from $G$ of Figure \ref{fig:ex1} via the operations described in Theorem \ref{thm:1} (left), and its $3$-polytopal line graph (right).}
	\label{fig:ex2}
\end{figure}

\begin{figure}[h!]
	\centering
	\[\begin{matrix}
		\includegraphics[width=3cm,clip=false]{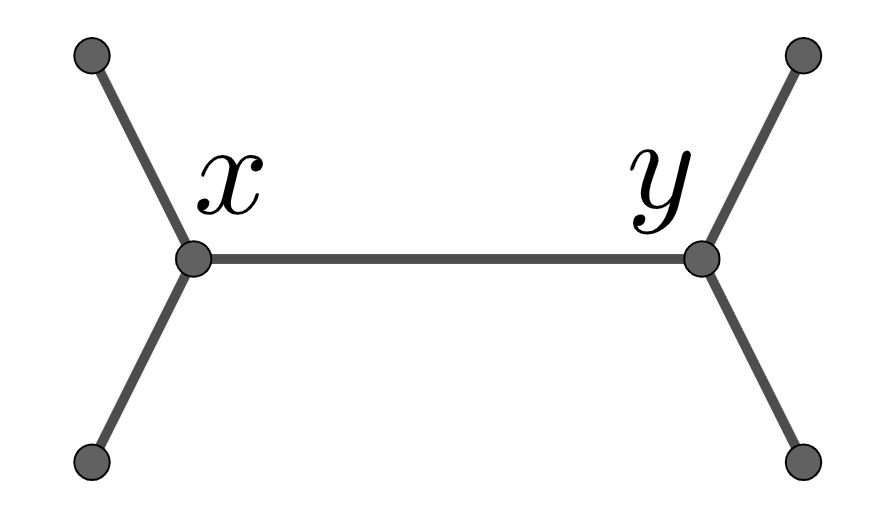}&\longrightarrow_{\T_1}&\includegraphics[width=3cm,clip=false]{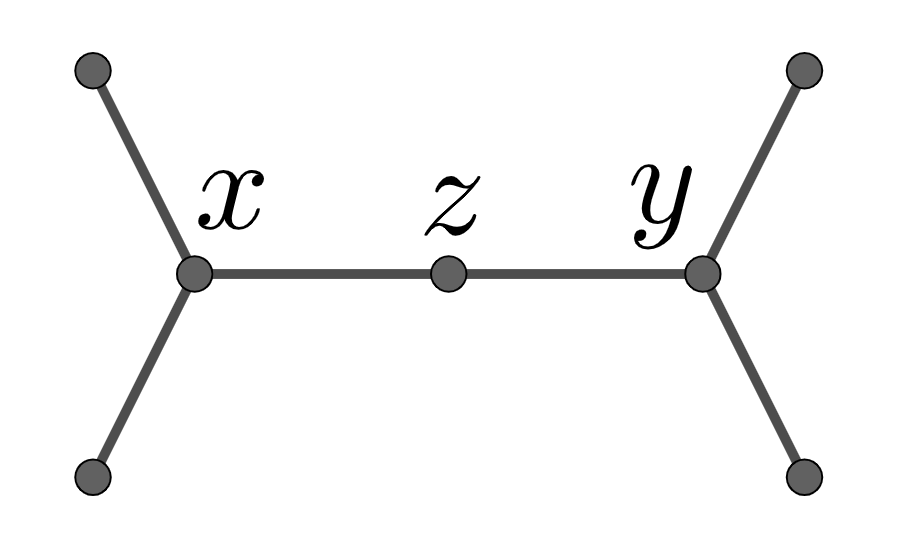}\\\downarrow_{\calL}&&\downarrow_{\calL}\\\includegraphics[width=3.5cm,clip=false]{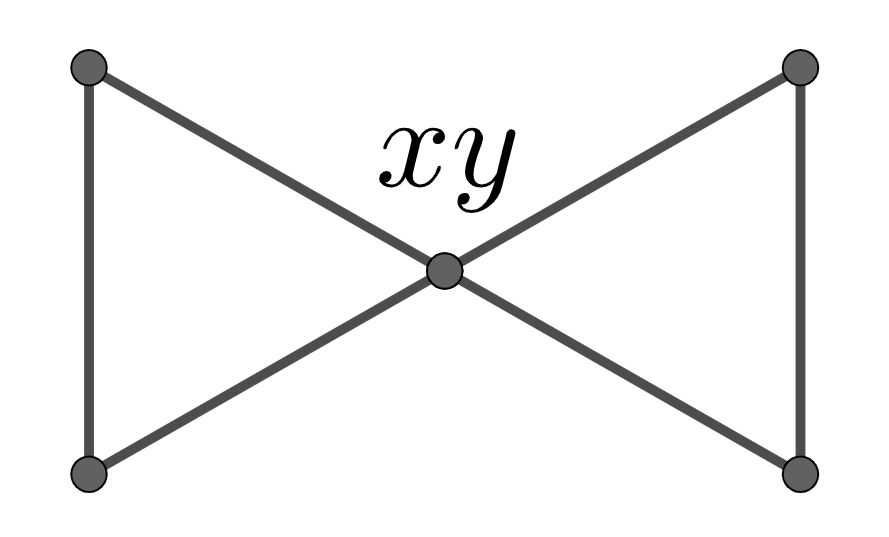}&\longrightarrow_{\T_1'}&\includegraphics[width=3.5cm,clip=false]{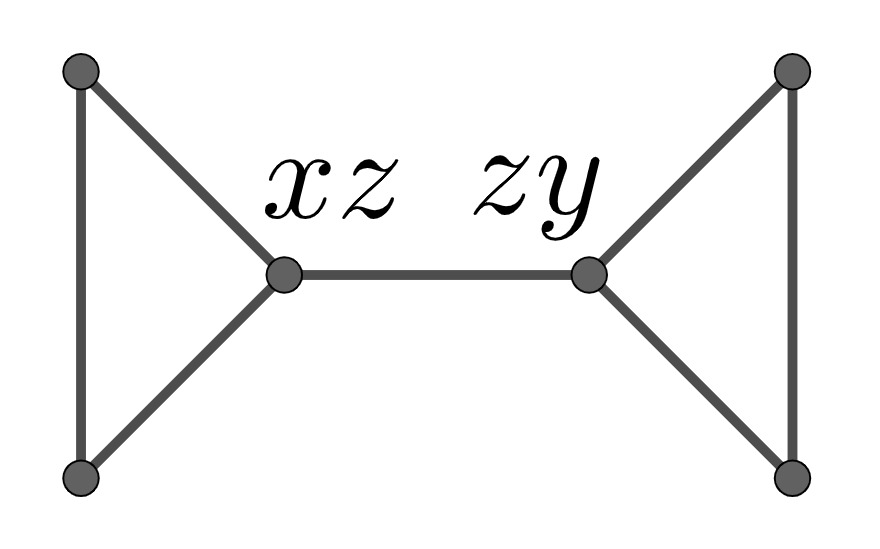}
	\end{matrix}
	\]
	\caption{The transformation $\T_1$ applied to $\Gamma$ subdivides an edge $xy$ with endpoints both of degree $3$. In other words, it deletes $xy$, and adds $z,xz,zy$. This is equivalent to applying $\T_1'$ to the line graph $\calL(\Gamma)$, where $xy$ is a vertex of degree $4$ lying on exactly two triangular faces, which intersect only at $xy$. That is to say, $\calL(\T_1(\Gamma))=\T_1'(\calL(\Gamma))$.}
	\label{fig:1}
\end{figure}

\begin{figure}[h!]
	\centering
\[\begin{matrix}
\includegraphics[width=3.0cm,clip=false]{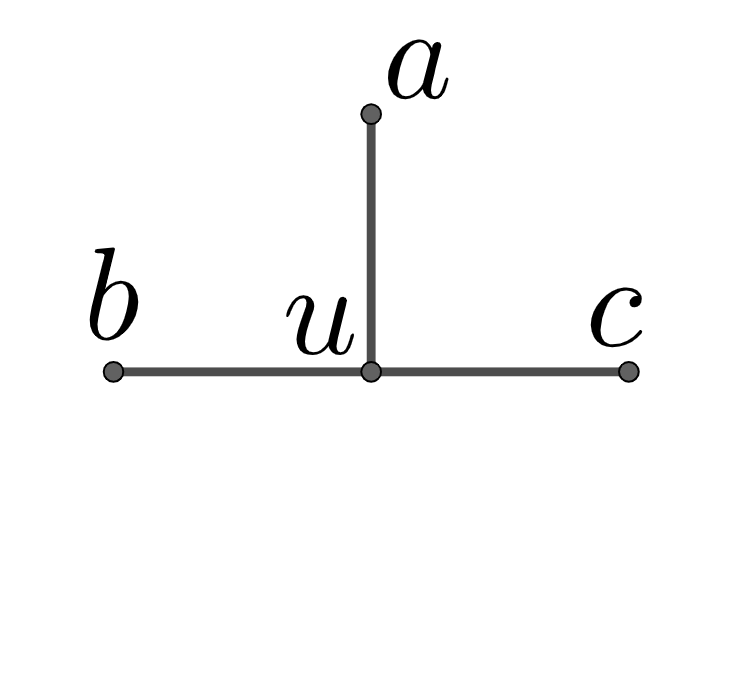}&\longrightarrow_{\T_2}&\includegraphics[width=3.0cm,clip=false]{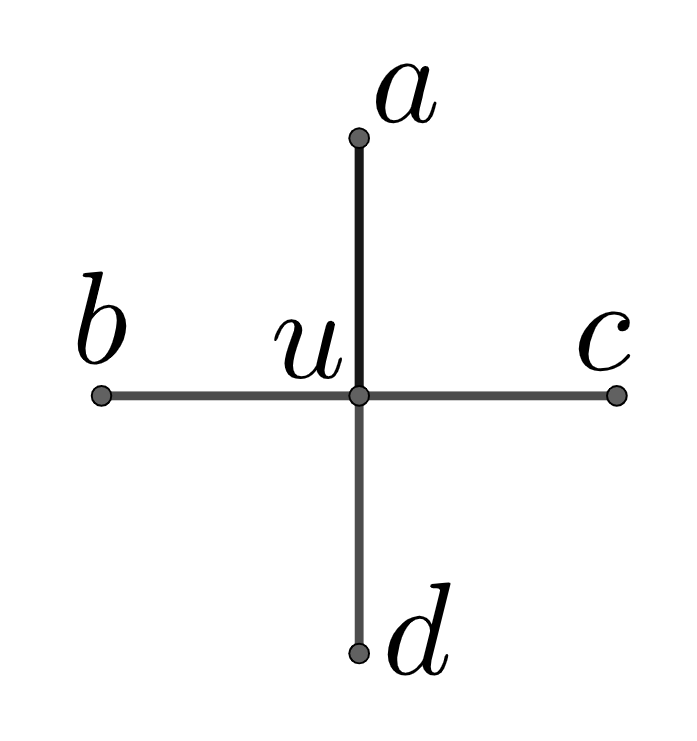}\\\downarrow_{\calL}&&\downarrow_{\calL}\\\includegraphics[width=4.5cm,clip=false]{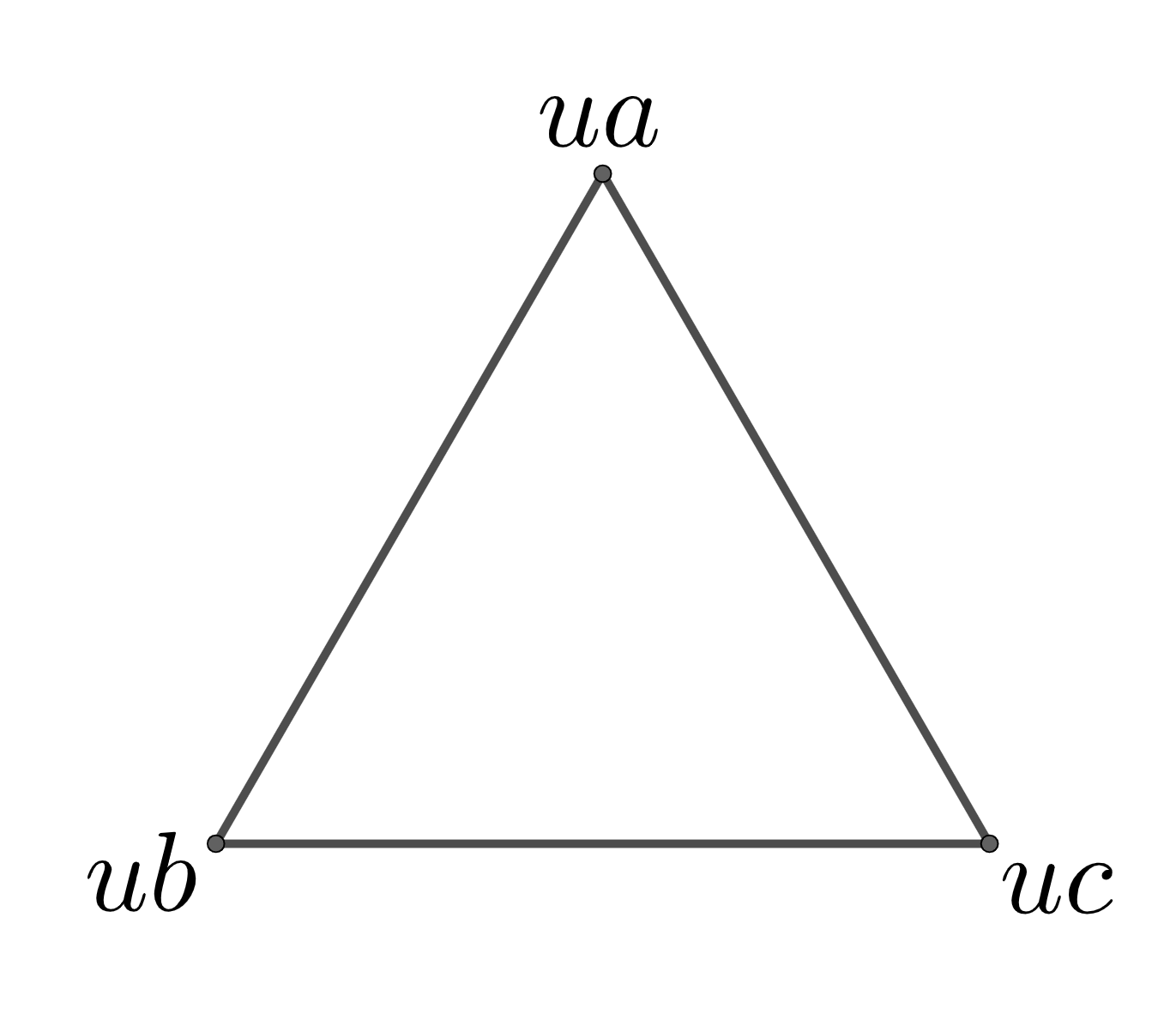}&\longrightarrow_{\T_2'}&\includegraphics[width=4.5cm,clip=false]{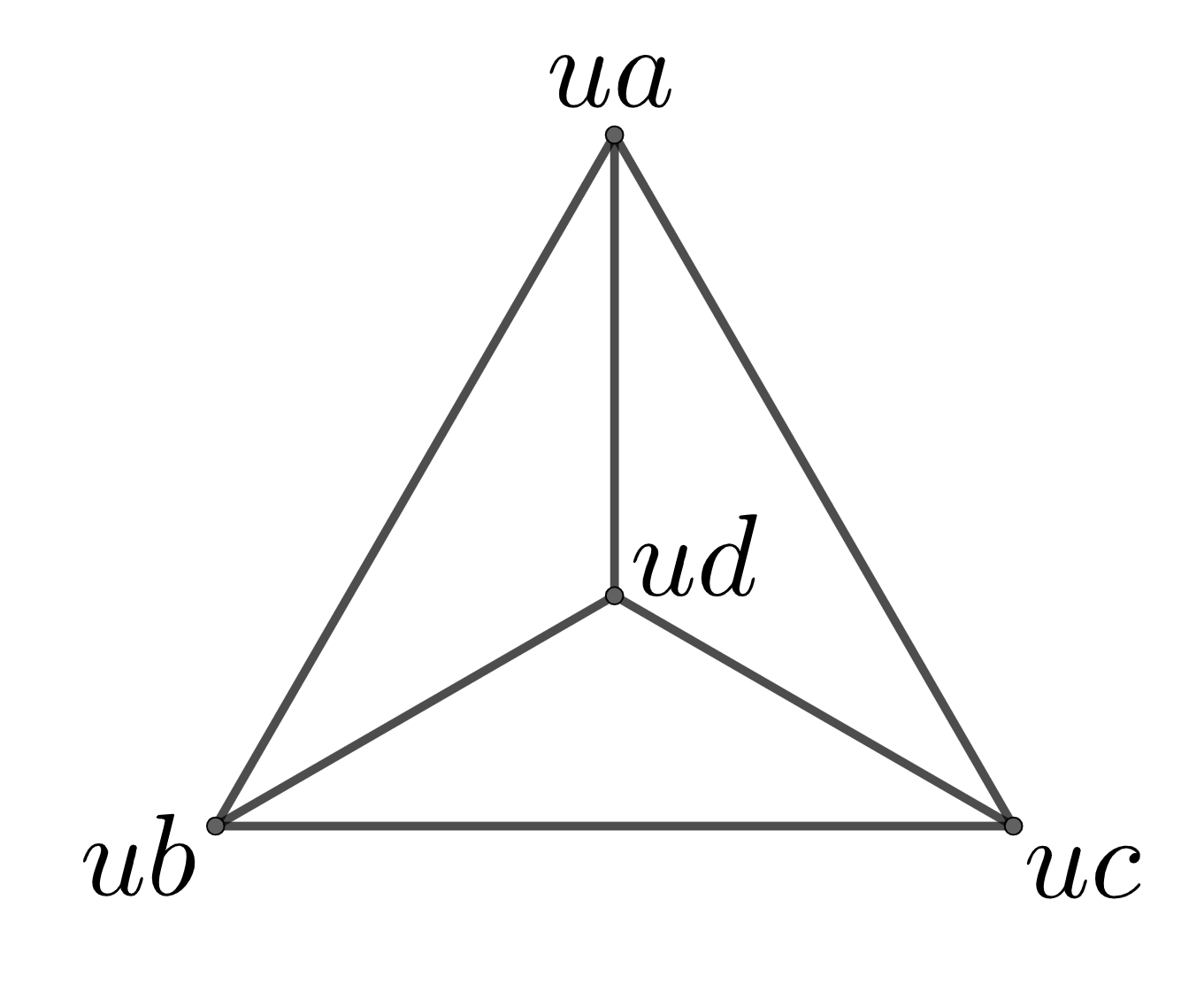}
\end{matrix}
\]
\caption{The transformation $\T_2$ applied to $\Gamma$ adds a vertex $d$ and an edge $ud$, where $\deg_\Gamma(u)=3$. This is equivalent to applying $\T_2'$ to the line graph $\calL(\Gamma)$, i.e., adding a vertex $ud$ and edges $(ud,ua)$, $(ud,ub)$, $(ud,uc)$, where $ua,ub,uc$ is a triangular face in $\calL(\Gamma)$. That is to say, $\calL(\T_2(\Gamma))=\T_2'(\calL(\Gamma))$.}
\label{fig:2}
\end{figure}

\begin{figure}[h!]
	\begin{subfigure}{\widb}
		\centering
		\includegraphics[width=\wida,clip=false]{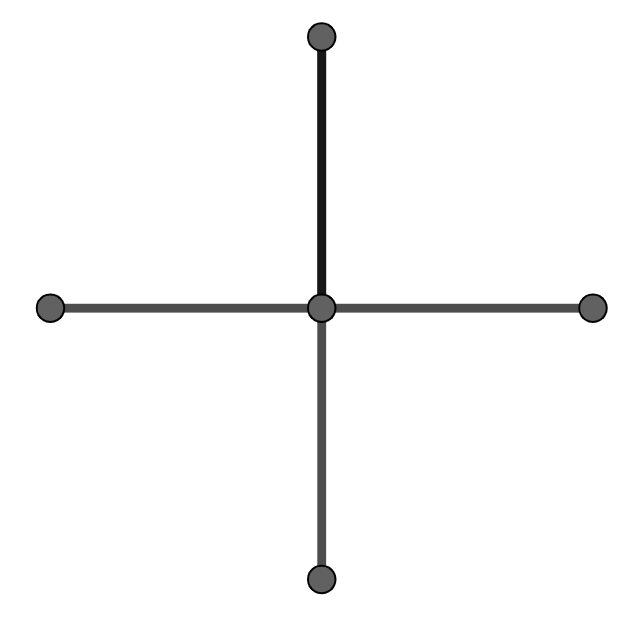}
		\vspace{0.5cm}\\\includegraphics[width=\wida,clip=false]{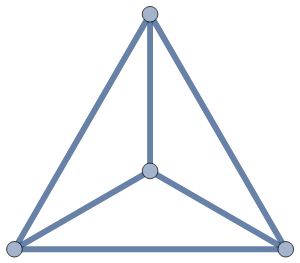}
		%\caption{$g_{13.01}$ and its dual}
	\end{subfigure}
	\begin{subfigure}{\widb}
		\centering
		\includegraphics[width=\wida,clip=false]{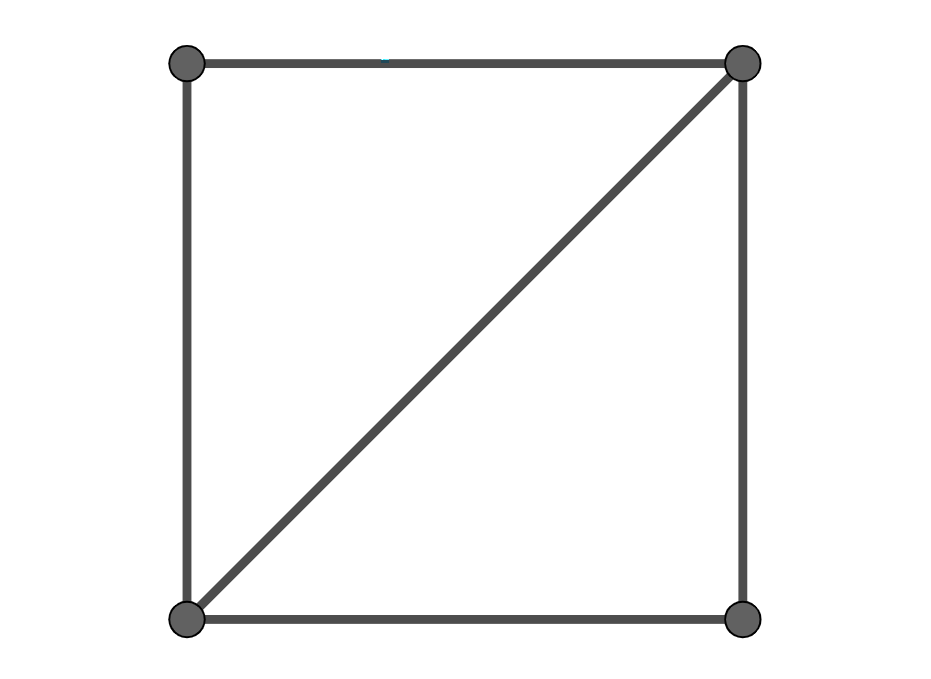}
		\vspace{0.5cm}\\\includegraphics[width=\wida,clip=false]{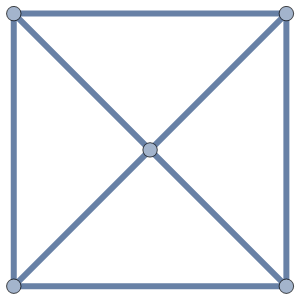}
		%\caption{$g_{13.03}$ and its dual}
	\end{subfigure}
	\begin{subfigure}{\widb}
		\centering
		\includegraphics[width=\wida,clip=false]{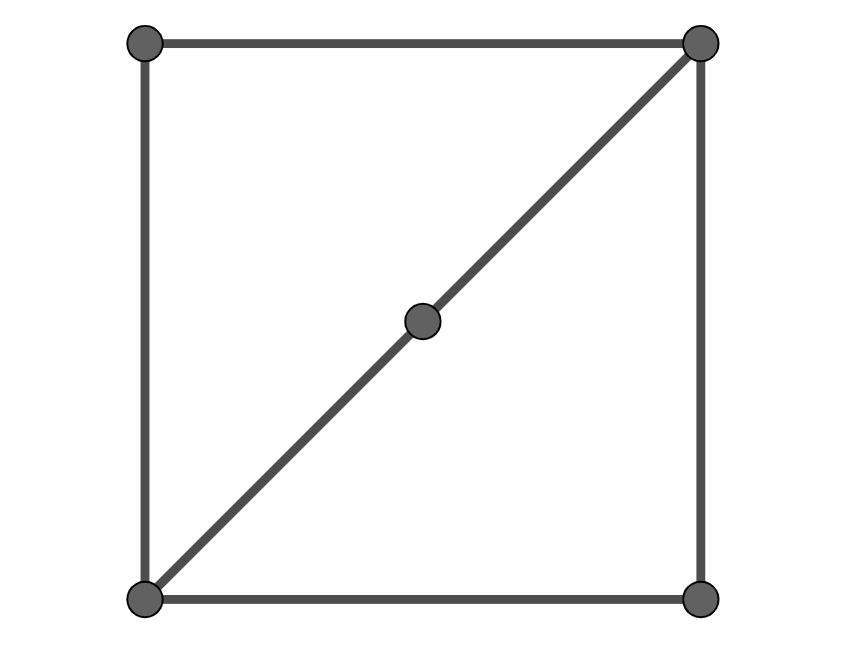}
		\vspace{0.5cm}\\\includegraphics[width=\wida,clip=false]{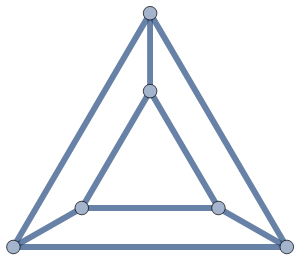}
		%\caption{$g_{13.05}$ and its dual}
	\end{subfigure}
	\begin{subfigure}{\widb}
		\centering
		\includegraphics[width=\wida,clip=false]{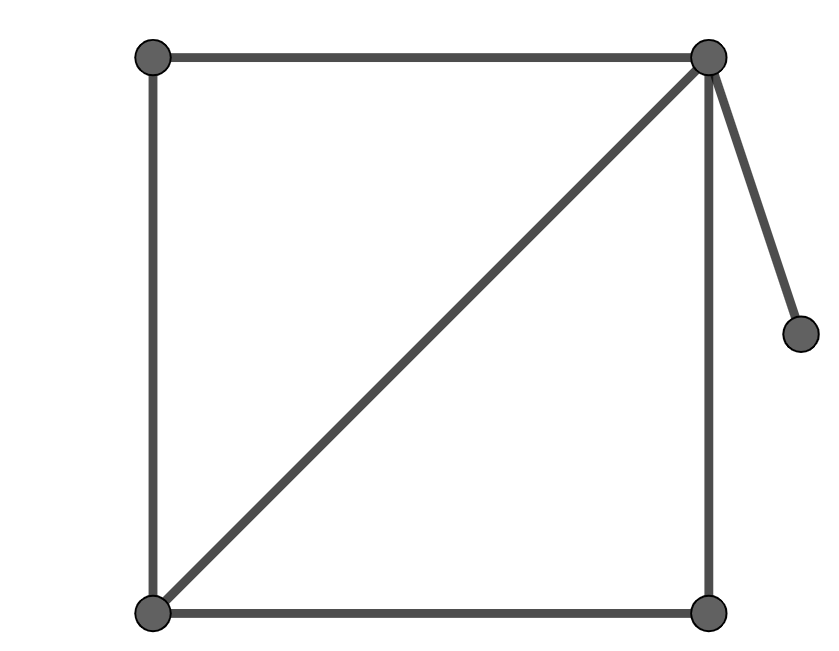}
		\vspace{0.5cm}\\\includegraphics[width=\wida,clip=false]{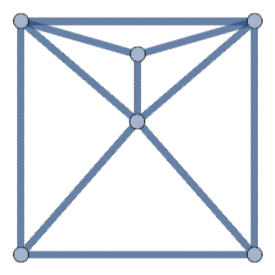}
		%\caption{$g_{13.07}$ and its dual}
	\end{subfigure}
	\begin{subfigure}{\widb}
		\centering
		\includegraphics[width=\wida,clip=false]{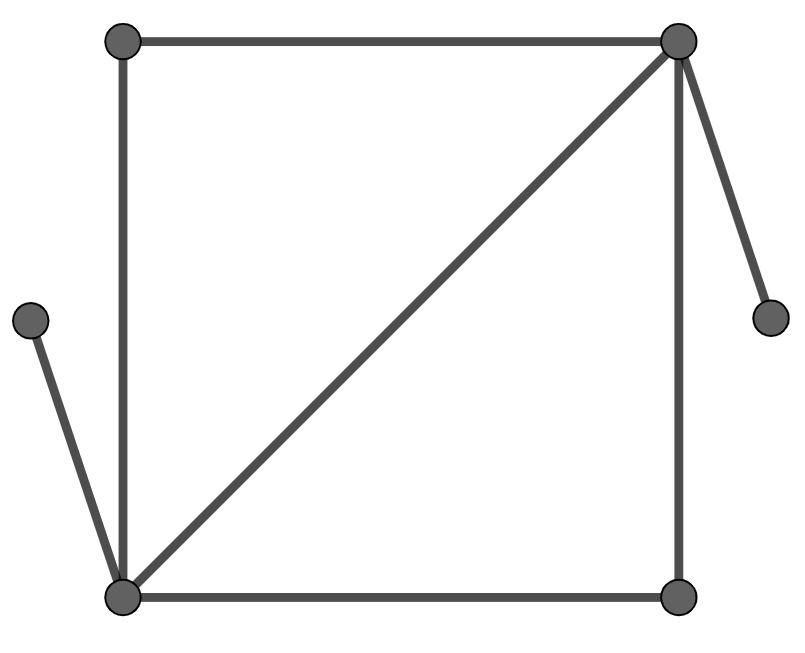}
		\vspace{0.5cm}\\\includegraphics[width=\wida,clip=false]{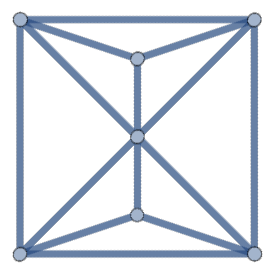}
		%\caption{$g_{13.03}$ and its dual}
	\end{subfigure}
	\begin{subfigure}{\widb}
		\centering
		\includegraphics[width=\wida,clip=false]{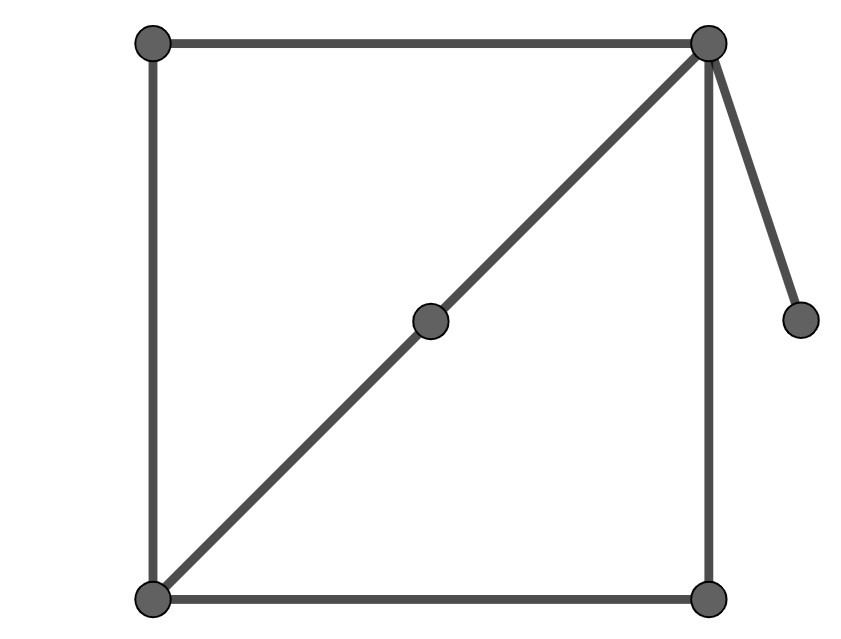}
		\vspace{0.5cm}\\\includegraphics[width=\wida,clip=false]{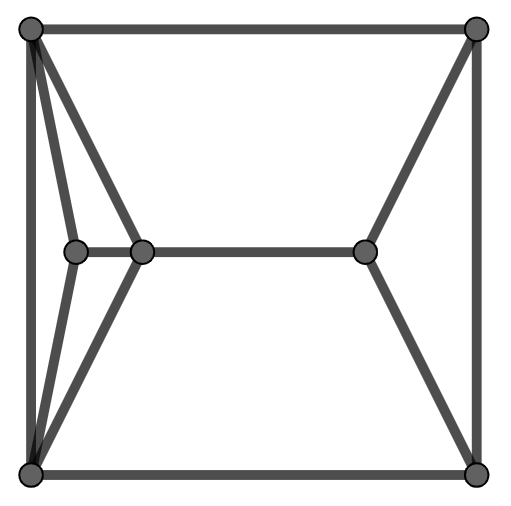}
		%\caption{$g_{13.05}$ and its dual}
	\end{subfigure}
	\begin{subfigure}{\widb}
		\centering
		\includegraphics[width=\wida,clip=false]{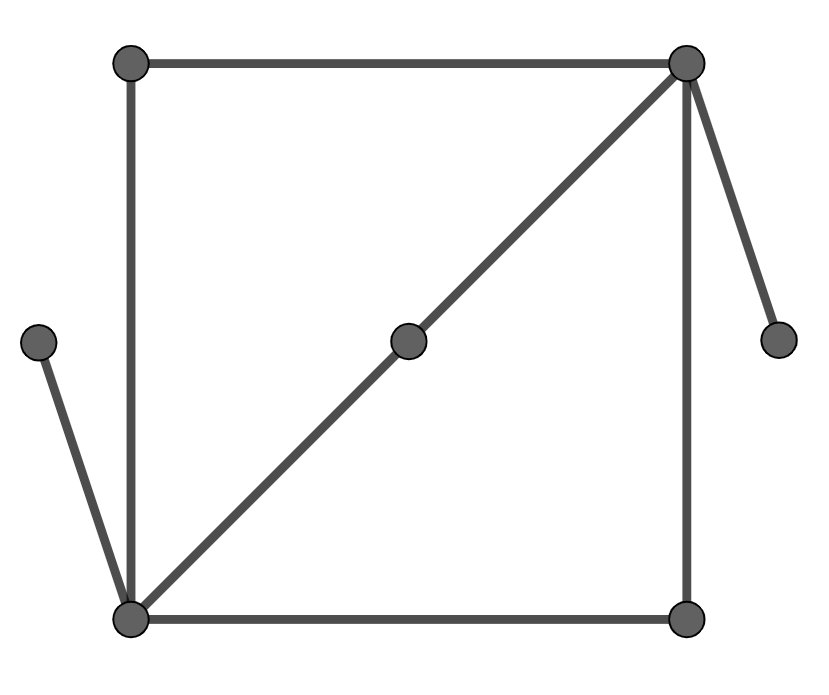}
		\vspace{0.5cm}\\\includegraphics[width=\wida,clip=false]{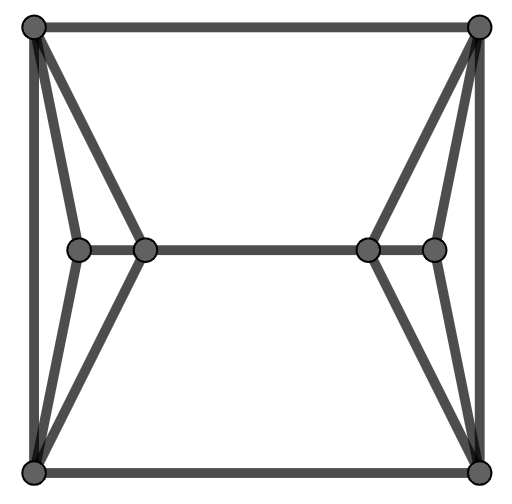}
		%\caption{$g_{13.07}$ and its dual}
	\end{subfigure}
	\caption{The seven exceptional graphs $J_i$, $1\leq i\leq 7$ (top row), and their respective $3$-polytopal line graphs.}
	\label{fig:sp}
\end{figure}

A special well-known case of Theorem \ref{thm:1} is given by the cubic $3$-polytopes $\Gamma$, the line graphs of which are quartic $3$-polytopes $\calL(\Gamma)$ (e.g. Figure \ref{fig:ex1}). In this special case, the line graph of $\Gamma$ is also the \textit{medial} graph $\calM(\Gamma)$ of $\Gamma$. For $\Gamma$ a connected, plane graph, the medial graph is defined as
\[V(\calM(\Gamma))=E(\Gamma)\]
and
\[E(\calM(\Gamma))=\{(e,e') : e,e' \text{ are consecutive edges on a region of $\Gamma$}\}.\]
In general, by definition the medial graph of a plane graph $\Gamma$ is a (spanning) subgraph of $\calL(\Gamma)$.

An equivalent reformulation of Theorem \ref{thm:1} is, $\calP$ is a line graph and also a $3$-polytope (other than the seven exceptions in Figure \ref{fig:sp}, bottom row) if and only if $\calP$ is obtained from the medial graph of a cubic $3$-polytope via transformations of type $\T_1'$ followed by transformations of type $\T_2'$ (Figures \ref{fig:1} and \ref{fig:2}). Hence to construct all solutions of our problem, we apply two types of transformations to an infinite starting class of $3$-polytopes.

By the way, for $\Gamma$ a $3$-polytope, $\calM(\Gamma)$ is also the dual of the \textit{radial} graph $\calR(\Gamma)$ \cite[\S 3]{mafpo3}, sometimes called the vertex-face graph. The latter is constructed by inserting a vertex for each vertex and for each face of $\Gamma$, and edges between vertices $(v,F)$ of $\calR(\Gamma)$ whenever in $\Gamma$ the vertex $v$ lies on the face $F$. For instance, the medial and radial graph of the tetrahedron are the octahedron and cube respectively. The medial and radial graphs of a $3$-polytope are $3$-polytopes. The medial is quartic and the radial is a quadrangulation. Dual pairs of $3$-polytopes have the same medial and radial graph.

In the rest of this paper, we will present the proof of Theorem \ref{thm:1}.

%\paragraph{Discussion.}
%\paragraph{Notation.}

\section{Proof of Theorem \ref{thm:1}}
\label{sec:2}
%\begin{proof}[Proof of Theorem \ref{thm:1}]
Henceforth we will be using $\Gamma-v$, $\Gamma-v-e$ and similar expressions to mean deleting vertices and/or edges from a graph.

Let $\calP=\calL(G)$ be a $3$-polytope, other than the seven in Figure \ref{fig:sp}, bottom row. In particular, $\calP$ is planar, hence by \cite{sedlacek1964some} (see also \cite[Theorem 1]{sedlavcek1990generalized}), we know that $G$ is planar, of maximal vertex degree $\Delta(G)\leq 4$, and moreover, if
\[\deg_G(u)=4,\]
then $u$ is a separating vertex in $G$ (i.e., $G-u$ is disconnected). Note that the necessary condition $\Delta(G)\leq 4$ is clear: if a vertex in $G$ has degree at least $5$, then $\calL(G)$ contains a copy of $K_5$, hence $\calL(G)$ is non-planar by Kuratowski's Theorem. Further, we may assume that $\delta(G)\geq 1$ and that $G$ is connected. For a characterisation of graphs with planar line graphs in terms of forbidden subgraphs, see \cite{greenwell1972forbidden}.

\begin{lemma}
	\label{lem:14}
Let $\calP=\calL(G)$ be a $3$-polytope. Then all of the following hold.
\begin{enumerate}[label=(\alph*)]
\item
In $G$, two vertices of degree $2$ are never adjacent;
\item
in $G$, every vertex of degree $1$ is adjacent to a vertex of degree $4$;
\item
if $\calP$ is not the tetrahedron, then in $G$ every vertex of degree $4$ is adjacent to exactly one vertex of degree $1$.
\end{enumerate}
\end{lemma}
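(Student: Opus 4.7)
The plan is to exploit, in each part, that $\calP=\calL(G)$ is a $3$-polytope and therefore $3$-connected, so $\delta(\calP)\geq 3$ and $\calP$ has no $2$-element vertex cut. The basic computation I return to is that for any edge $uv\in E(G)$ the corresponding vertex of $\calL(G)$ has degree $\deg_G(u)+\deg_G(v)-2$, and that an edge of $\calL(G)$ exists between two vertices only when the corresponding edges of $G$ share an endpoint. Parts (a) and (b) are then almost immediate: if $u,v$ are adjacent in $G$ with $\deg_G(u)=\deg_G(v)=2$, then $uv$ has only $2$ neighbours in $\calL(G)$, contradicting $\delta(\calP)\geq 3$; and if $\deg_G(u)=1$ with unique neighbour $v$, then $\deg_\calP(uv)=\deg_G(v)-1\geq 3$, forcing $\deg_G(v)\geq 4$, and the bound $\Delta(G)\leq 4$ recalled at the start of the section then forces equality.

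For the ``at most one'' direction of (c), I suppose a degree-$4$ vertex $v$ has two degree-$1$ neighbours $a_1,a_2$, with $b,c$ its other two neighbours. In $\calL(G)$ the four vertices $\{va_1,va_2,vb,vc\}$ induce a $K_4$, and $va_1,va_2$ have no neighbour outside it. A short case split on the number of degree-$1$ neighbours of $v$ then finishes the argument: four such neighbours force $G=K_{1,4}$ and $\calP$ to be the tetrahedron, which is excluded; exactly three such neighbours make the unique remaining $v$-edge (the one with a degree-$\geq 2$ endpoint) a cut vertex of $\calL(G)$, separating the triangle on the other three $v$-edges from the rest; exactly two such neighbours produce a $2$-cut $\{vb,vc\}$ isolating $\{va_1,va_2\}$ from the rest of $\calL(G)$, which is non-empty since $b$ or $c$ carries further edges.

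For the ``at least one'' direction, I suppose a degree-$4$ vertex $v$ has all four neighbours of degree $\geq 2$, aiming to produce a $2$-cut of $\calL(G)$. By the separating-vertex criterion cited at the start of the section, $v$ separates $G$, so $G-v$ decomposes into components. Each component contains at least one neighbour of $v$ (otherwise a vertex in it has no path to $v$ in $G$), and at least one internal edge (since each such neighbour has degree $\geq 2$ in $G$ and cannot reach another component of $G-v$). Picking neighbours $a,b$ of $v$ in different components, I claim that $\{va,vb\}$ is a $2$-cut of $\calL(G)$: after removal, any $\calL(G)$-path from an edge of the $a$-component to anything outside it must be mediated by shared endpoints in $G$, and a shared endpoint across two components of $G-v$ can only be $v$, whose relevant edges to the $a$-side have now been cut. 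The main obstacle is the case analysis underlying this last step: across the possible splits $1+3$, $2+2$, $1+1+2$, $1+1+1+1$ of the four $v$-neighbours among components of $G-v$, one must verify that an appropriate choice of $\{va,vb\}$ really separates two non-empty parts of $\calL(G)$, which rests on the non-emptiness of each component's line graph established above.
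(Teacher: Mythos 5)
Parts (a) and (b), and the ``at most one'' half of (c), are correct and essentially the paper's own argument (the paper handles the ``at most one'' half in a single stroke: if $a,b$ are two degree-$1$ neighbours of $u$, then connectivity of $\calP-uc-ud$ forces $V(\calP)=\{ua,ub,uc,ud\}$, i.e.\ $\calP$ is the tetrahedron; your explicit split into the cases of two, three or four degree-$1$ neighbours reaches the same conclusion and is fine).

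The gap is in the ``at least one'' half of (c). Your recipe is to pick neighbours $a,b$ of $v$ in \emph{different} components of $G-v$ and claim that $\{va,vb\}$ is a $2$-cut of $\calL(G)$. This fails exactly in the $2+2$ split you list: if $a,c$ lie in one component $C_1$ of $G-v$ and $b,d$ in another, then after deleting the vertices $va,vb$ the vertex $vc$ is still present; it is adjacent to some edge $cc'$ of $C_1$ (which exists because $\deg_G(c)\geq 2$ and $c'\neq v$ lies in $C_1$), and via the shared endpoint $v$ it is adjacent to $vd$, which in turn meets the edges of the component containing $b,d$. So $\calL(G)-va-vb$ remains connected and no contradiction is obtained; your justification ``whose relevant edges to the $a$-side have now been cut'' is false here, since $vc$ is also an edge from $v$ to the $a$-side. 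The correct choice in this case is the two edges from $v$ into the \emph{same} component, $\{va,vc\}$ --- this is precisely the paper's second case ($a',b'$ in one component of $G-u$ not containing $c',d'$, so $G-ua-ub$ is disconnected and $\{ua,ub\}$ is a $2$-cut of $\calP$). In the remaining splits ($1+3$, $1+1+2$, $1+1+1+1$) some neighbour $a$ is alone in its component and the single vertex $va$ is already a cut vertex of $\calP$ (the paper's first case); there your choice works, but is more than is needed. Since you explicitly defer this case analysis (``one must verify \dots'') and the one guiding claim you do state picks the wrong pair in the $2+2$ split, the ``at least one'' direction is incomplete as written; it is repaired by distinguishing, as the paper does, whether some neighbour of $v$ is alone in its component of $G-v$ (take the corresponding single vertex of $\calP$ as a cut vertex) or the neighbours split two-and-two (take the two edges of $G$ from $v$ into one component as the $2$-cut).
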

\begin{proof}
\begin{enumerate}[label=(\alph*)]
\item
By contradiction, let $ab\in E(G)$, with $\deg_{G}(a)=\deg_{G}(b)=2$. Then $\deg_\calP(ab)=2$, contradiction.
\item
Let $\deg_G(v)=1$. Since $\delta(\calP)\geq 3$, then in $G$ each edge is incident to at least three other edges. Therefore, the only neighbour of $v$ has degree at least $4$. Since $\Delta(G)\leq 4$, $v$ is in fact adjacent to a vertex of degree exactly $4$.
\item
Let $\deg_G(u)=4$, with $u$ adjacent to $a,b,c,d$. 
%By \cite[]{sedl}, $u$ is a separating vertex in $G$. 
Suppose by contradiction that $\deg_G(a)=\deg_G(b)=1$. Since $\calP-uc-ud$ is connected, it follows that $V(\calP-uc-ud)=\{ua,ub\}$, i.e. $\calP$ is the tetrahedron, contradiction.

Now assume by contradiction that \[\deg_G(a),\deg_G(b),\deg_G(c),\deg_G(d)\geq 2,\]
with $aa',bb',cc',dd'\in E(G)$, and $u\neq a',b',c',d'$. As mentioned above, \cite{sedlacek1964some} implies that $u$ is a separating vertex of $G$, thus the graph
\[\tilde{G}:=G-ua-ub-uc-ud\]
is disconnected. In $\tilde{G}$, w.l.o.g, either $a'$ is in a distinct connected component from all of $b',c',d'$, or $a',b'$ are in the same connected component of $\tilde{G}$, which does not contain $c',d'$. In the first case, $G-ua$ is disconnected, thus $\calP$ has a separating vertex $ua$; in the second case, $G-ua-ub$ is disconnected, thus $\calP$ has a $2$-cut $\{ua,ub\}$. Either way, $\calP$ is not $3$-connected, contradiction.
\end{enumerate}
\end{proof}

We now define
\begin{equation}
G_1:=G-\{\text{vertices of degree } 1\}.
\end{equation}
By Lemma \ref{lem:14}, this is tantamount to applying transformation $\T_2^{-1}$ (the inverse of $\T_2$ in Figure \ref{fig:2}) on all vertices of degree $4$ in $G$. Combining $\Delta(G)\leq 4$ and $\delta(G)\geq 1$ with Lemma \ref{lem:14}, we deduce that in $G_1$ all vertices have degree $2$ or $3$, and moreover, each vertex of degree $2$ in $G_1$ is adjacent only to vertices of degree $3$.

% It follows that $G'$ may be obtained from a cubic graph $G''$ by subdividing each edge at most once. We thus write

Next, we define
\begin{equation}
	\label{eq:g2}
	G_2:=\text{graph obtained from } G_1 \text{ by contracting all vertices of degree } 2 \text{ not on triangles}.
\end{equation}
That is to say, we have applied transformation $\T_1^{-1}$ (the inverse of $\T_1$ in Figure \ref{fig:1}) to all vertices of degree $2$ in $G_1$ that do not lie on $3$-cycles.

We claim that $G_2$ is a cubic graph. By contradiction, let $u$ be a vertex of degree $2$ in $G_2$, with neighbours $v,w$. As shown above,
\[\deg_{G_2}(v)=\deg_{G_2}(w)=3.\]
By construction, $u$ lies on a $3$-cycle, so that $vw\in E(G_2)$. Let $x$ be the remaining neighbour of $v$, and $y$ the remaining neighbour of $w$. In $G$, the vertex $v$ is either of degree $3$ or $4$, and if $4$, by construction the neighbour of $v$ not in $G_2$ has degree $1$ in $G$. The same obviously holds for $w$. Since
\[\calP-vx-wy\]
is connected, then $x=y$ and $E(G_2)=\{uv,uw,vw,vx,wx\}$, i.e. $G_2$ is the diamond graph. Hence $G_1$ is either the diamond graph or $K_{2,3}$, and in turn, $G$ is one of the graphs $J_i$, $2\leq i\leq 7$ in Figure \ref{fig:sp}, contradiction.

Hence $G_2$ is a cubic, planar (connected) graph. Our next goal is to show that it is $3$-connected, hence it is a $3$-polytope.

\begin{lemma}
The graph $G_2$ in \eqref{eq:g2} is $3$-connected.
\end{lemma}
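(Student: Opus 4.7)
The plan is to assume that $G_2$ has a vertex cut of size at most $2$ and derive a contradiction with the $3$-connectedness of $\calP$. The bridge between the two is the standard observation that, under the identification $V(\calP)=E(G)$, one has $\calP-F=\calL(G-F)$ for any $F\subseteq E(G)$, and the latter is disconnected if and only if $G-F$ has at least two components each containing an edge. Hence it suffices to produce an edge set $F\subseteq E(G)$ of size at most $2$ whose removal splits $G$ into two edge-containing pieces; this $F$ is then a vertex cut of $\calP$ of size at most $2$, contradicting $3$-connectedness.

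The first step is to convert the hypothetical vertex cut of $G_2$ into an edge cut of $G_2$ of the same size. For a $1$-cut $\{u\}$, the three edges at $u$ split as $(2,1)$ or $(1,1,1)$ among the components of $G_2-u$, so at least one of them is a bridge. For a minimal $2$-cut $\{u,v\}$ with components $A,B$ of $G_2-\{u,v\}$, minimality forces each of $u,v$ to have at least one neighbour in each of $A,B$ (else one of $\{u\},\{v\}$ would already be a $1$-cut). Letting $(a_u,b_u)$ and $(a_v,b_v)$ record the splits of $u$'s and $v$'s remaining neighbours between $A$ and $B$, a short case check yields a $2$-edge cut in every case: if $u\sim v$, or if $u\not\sim v$ with a symmetric split $(1,2,1,2)$ or $(2,1,2,1)$, then the partition $\{A,\text{rest}\}$ or $\{B,\text{rest}\}$ has exactly $2$ cut edges; in the asymmetric case $(1,2,2,1)$ with $u\not\sim v$, both of those partitions yield $3$ cut edges, but the repartition $\{v\}\cup A$ versus $\{u\}\cup B$ has a $2$-edge cut consisting of $u$'s unique edge to $A$ and $v$'s unique edge to $B$.

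Next I lift this edge cut from $G_2$ to $G$. By \eqref{eq:g2}, every edge $xy$ of $G_2$ is either an edge of $G$ or corresponds to a subdivided path $x$-$x^*$-$y$ in $G$ through a single degree-$2$ vertex $x^*$; in either case removing one appropriately chosen edge of $G$ separates $x$ from $y$ in $G$ just as removing $xy$ does in $G_2$. Pendants and subdivision vertices each sit on a well-defined side of the resulting partition of $V(G)$ by Lemma~\ref{lem:14}, so the lifted $F\subseteq E(G)$ of size at most $2$ partitions $V(G)$ into the two sides obtained at the $G_2$ level, enlarged by the attached pendants and subdivision vertices. Each side contains an edge: the parity identity $3|A|=2e_A+(a_u+a_v)$ together with simplicity rules out $|A|\leq 2$ (a singleton would need $3$ edges into $\{u,v\}$, and $|A|=2$ forces a non-integer or impossible $e_A$), so both $A$ and $B$ have at least $3$ vertices, and being cubic forces an internal edge on each side.

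The main obstacle will be the asymmetric $(1,2,2,1)$ case in the first step: the two obvious partitions both yield a $3$-edge cut, and one has to spot the less natural repartition pairing $u$ with $B$ and $v$ with $A$ to bring the cut down to size $2$. The lifting and the final conversion of an edge cut of $G$ into a vertex cut of $\calP$ are then routine given Lemma~\ref{lem:14}.
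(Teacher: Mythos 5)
Your proof is correct, but it takes a genuinely different route from the paper's at the decisive step. For $1$-cuts the two arguments essentially coincide: a cut vertex of the cubic graph $G_2$ yields a bridge, which persists (possibly as a subdivided path) in $G$, and deleting the corresponding vertex of $\calP=\calL(G)$ disconnects it, contradicting $3$-connectivity. For $2$-cuts, however, the paper never returns to $\calP$: it invokes the criterion that a $2$-connected planar graph is $3$-connected if and only if no two regions meet in two non-adjacent vertices, and excludes such a pair of regions directly from cubicity of $G_2$ (the offending vertex would acquire four distinct neighbours). You instead run the standard cubic $\kappa=\lambda$ reduction, turning a minimal vertex $2$-cut into a $2$-edge cut --- including the asymmetric $(1,2,2,1)$ split via the repartition $\{v\}\cup A$ versus $\{u\}\cup B$ --- then lift that edge cut through subdivision vertices and pendants (using Lemma \ref{lem:14} and the construction \eqref{eq:g2}) to a set $F\subseteq E(G)$ with $|F|\leq 2$ whose sides each contain an edge, and read it off as a small vertex cut of $\calP$ via $\calP-F=\calL(G-F)$. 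Your route is planarity-free, treats $1$- and $2$-cuts uniformly, and is more explicit than the paper about details the paper compresses (that a cut edge of $G_2$ may correspond to a subdivided path in $G$, and that both sides retain an edge so the line graph genuinely disconnects); for the latter point, note that the simpler observation that each vertex on either side has at most two crossing edges, hence an internal one, already suffices in place of your counting identity. The paper's face-intersection argument is in exchange shorter and purely local, avoids the edge-cut case analysis and the lifting altogether, and reuses the same Diestel criterion it applies again for $\T_1'$. (Minor: you write out only one asymmetric split, but the other follows by exchanging the roles of $u$ and $v$.)
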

\begin{proof}
We begin by showing that $G_2$ is $2$-connected. By contradiction, let $u$ be a separating vertex of $G_2$, with neighbours $a,b,c$. W.l.o.g., the component of $G_2-u$ containing $a$ does not contain $b,c$. Thereby, $G_2-ua$ is disconnected. By construction of $G_2$, the edge $ua$ is a bridge in $G$, and both $u,a$ have degree at least $3$ in $G$. Therefore,
\[\calP-ua\]
is disconnected, contradiction. Hence $G_2$ is $2$-connected.

Next, let us check that $G_2$ is in fact $3$-connected. It is well-known that a planar graph is $2$-connected if and only if each of its regions is delimited by a cycle; moreover, a $2$-connected, planar graph is $3$-connected (hence a $3$-polytope) if and only if each pair of regions intersects in either the empty set, or a vertex, or an edge \cite[\S 4]{dieste}. By contradiction, let $R,S$ be regions of $G_2$ such that their intersection contains the vertices $x,y$, with $xy\not\in E(G_2)$. We write
\[R=[r_1,r_2,\dots,r_{i-1},x=r_i,r_{i+1}, \dots,r_{j-1},y=r_j,r_{j+1},\dots,r_m]\]
and
\[S=[s_1,s_2,\dots,s_{k-1},x=s_k,s_{k+1}, \dots,s_{\ell-1},y=s_\ell,s_{\ell+1},\dots,s_n],\]
where $j\geq i+2\geq 4$, and $\ell\geq k+2\geq 4$. Since $G_2$ is cubic, the degree of $r_{i-1},r_{i+1}$ is $3$, thus $r_{i-1},r_{i+1}\not\in S$. Therefore, $x$ is adjacent to the four distinct vertices
\[r_{i-1},r_{i+1},s_{k-1},s_{k+1},\]
contradiction.
\end{proof}
%Since $\calP$ is $3$-connected, then $G$ is $3$-edge-connected. Since $G''$ was obtained from $G$ by removing vertices of degree $1$ and contracting edges, then $G''$ is also $3$-edge-connected.
%We claim that $G''$ is $2$-connected. By contradiction, let $w$ be a separating vertex, and $a,b,c$ its three neighbours. W.l.o.g., in $G''-w$, $a$ belongs to a different component from both of $b,c$. Hence $G''-wa$ is disconnected, contradiction.

We have succeeded in proving that, if $\calP=\calL(G)$ is a $3$-polytope other than the seven in Figure \ref{fig:sp}, bottom row, then $G$ is obtained from a cubic $3$-polytope $G_2$ by subdividing each edge at most once, and then adding at most one degree $1$ neighbour to each vertex of degree $3$.

On the other hand, let $H_2$ be a cubic $3$-polytope. It is well-known that $\calL(H_2)$ is then a quartic $3$-polytope, that coincides with the medial graph of $H_2$.

We now fix any edge of $H_2$, and apply the transformation $\T_1$ (Figure \ref{fig:1}). We already know that for any graph $\Gamma$,
\[\calL(\T_1(\Gamma))=\T_1'(\calL(\Gamma)).\]
The next goal is to show the following.
\begin{lemma}
	\label{lem:ch}
If $\Gamma$ is a $3$-polytope, then so is $\T_1'(\Gamma)$.
\end{lemma}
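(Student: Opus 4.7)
The plan is to verify that $\Gamma' := \T_1'(\Gamma)$ is both planar and $3$-connected, by exploiting the central observation that contracting the new edge $v_1 v_2$ in $\Gamma'$ recovers $\Gamma$. Here $v$ denotes the degree-$4$ vertex of $\Gamma$ on which $\T_1'$ acts, with neighbours $a,b,c,d$ (distinct, since the two triangular faces $\{v,a,b\}$ and $\{v,c,d\}$ meet only at $v$); after the transformation, $v$ is replaced by $v_1$ (adjacent to $a,b$) and $v_2$ (adjacent to $c,d$), joined by the new edge $v_1 v_2$.

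Planarity is immediate from the local nature of $\T_1'$: within the plane embedding of $\Gamma$, draw $v_1$ inside the triangle $\{v,a,b\}$ and $v_2$ inside $\{v,c,d\}$, joining them by a short arc through the former position of $v$. Since the two triangles meet only at $v$, this local surgery leaves the rest of the embedding untouched, so $\Gamma'$ is a simple planar graph.

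For $3$-connectedness, I would rule out a $2$-cut $\{p,q\} \subseteq V(\Gamma')$ by case analysis on how $\{p,q\}$ meets $\{v_1,v_2\}$. If $\{p,q\} \cap \{v_1,v_2\} = \emptyset$, the edge $v_1 v_2$ survives in $\Gamma' - p - q$, so $v_1$ and $v_2$ lie in a common component; contracting $v_1 v_2$ then yields $\Gamma - p - q$ disconnected, contradicting the $3$-connectedness of $\Gamma$. If $\{p,q\} = \{v_1,v_2\}$, then $\Gamma' - v_1 - v_2 = \Gamma - v$, which is connected. Otherwise, up to symmetry $p = v_1$ and $q \in V(\Gamma) \setminus \{v\}$; then $\Gamma' - v_1 - q$ is obtained from the connected graph $\Gamma - v - q$ by attaching $v_2$ to those of $c,d$ that differ from $q$, and since $c \neq d$ at least one such edge remains, so $\Gamma' - v_1 - q$ is connected --- contradiction. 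An analogous but easier argument (omit $q$) shows $\Gamma'$ has no cut vertex either.

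The only step requiring any real care is the third case, where one must keep track of the possibilities $q \in \{a,b,c,d\}$; however, the uniform fact $c \neq d$ reduces it to a one-line check that $v_2$ retains a neighbour in the connected graph $\Gamma - v - q$. Combining planarity with the absence of $1$-cuts and $2$-cuts yields that $\Gamma'$ is a $3$-polytope.
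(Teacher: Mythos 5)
Your proof is correct, but it takes a genuinely different route from the paper's. The paper establishes $3$-connectivity via the planarity-specific criterion that a $2$-connected planar graph is $3$-connected if and only if any two regions meet in the empty set, a vertex, or an edge: a violating pair of regions of $\T_1'(\Gamma)$ would have to be the two regions containing the new edge, and these pull back to a violating pair of regions of $\Gamma$ around the degree-$4$ vertex, contradicting the $3$-connectivity of $\Gamma$. You instead argue purely combinatorially on vertex cuts, using the observation that contracting the new edge $v_1v_2$ recovers $\Gamma$: a $2$-cut (or cut vertex) of $\T_1'(\Gamma)$ disjoint from $\{v_1,v_2\}$ contracts to one of $\Gamma$, and cuts meeting $\{v_1,v_2\}$ are excluded by your short case analysis, where the key point is that $a,b,c,d$ are four distinct vertices (the two triangles meet only at the split vertex), so each of $v_1,v_2$ keeps a neighbour after deleting one further vertex. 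This is in effect the classical fact that splitting a vertex so that both new vertices have degree at least $3$ preserves $3$-connectivity, as in Tutte's wheel theorem. What each approach buys: yours needs no embedding at all for the connectivity part, so it is more elementary and more general, and it also explicitly covers the $1$-cut case, whereas the paper's argument tacitly presupposes $2$-connectivity of $\T_1'(\Gamma)$ before applying the face-intersection criterion; the paper's proof, on the other hand, is shorter on the page because it reuses the same criterion already invoked in showing $G_2$ is $3$-connected. Both treatments of planarity are the same obvious local surgery.
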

\begin{proof}
Clearly $\T_1'$ does not break planarity. To see that it does not break $3$-connectivity, we again use the fact that a $2$-connected, planar graph is $3$-connected if and only if each pair of regions intersects in either the empty set, or a vertex, or an edge \cite[\S 4]{dieste}. If $\T_1'(\Gamma)$ contains regions $R,S$ with intersection containing two non-adjacent vertices, then these are the two regions containing the edge $(xz,zy)$ in Figure \ref{fig:2}, bottom right, otherwise $\Gamma$ would not be $3$-connected either. But then, in Figure \ref{fig:2} bottom left, the top and bottom regions containing $xy$  also share two non-adjacent vertices. Thus $\Gamma$ is not $3$-connected either, contradiction.
\end{proof}

By Lemma \ref{lem:ch}, if $H_1$ is a graph obtained from $H_2$ by subdividing each edge at most once, then $\calL(H_1)$ is a $3$-polytope. Finally, we fix any vertex of degree $3$ in $H_1$ and apply the transformation $\T_2$ (Figure \ref{fig:2}). As we know, for any graph $\Gamma$,
\[\calL(\T_2(\Gamma))=T_2'(\calL(\Gamma)).\]
Applying $\T_2'$ to a $3$-polytope has the effect of adding a vertex inside a triangular face, and edges from it to the three vertices of the triangle. Thus clearly $\T_2'$ preserves planarity and $3$-connectivity.

Therefore, if $H$ is a graph obtained from $H_1$ by applying $\T_2$ at most once to each vertex of degree $3$, then $\calL(H)$ is a $3$-polytope. The proof of Theorem \ref{thm:1} is complete.
%\end{proof}

\paragraph{Acknowledgements.}
P. H-B. worked on this project as partial fulfilment of her bachelor thesis at Coventry University, autumn 2023-spring 2024, under the supervision of R. M.

\bibliographystyle{abbrv}
\bibliography{biblio}
\Addresses

\end{document}